
\documentclass{article}


\usepackage{amsmath}
\usepackage{amsthm}
\usepackage{amsfonts}
\usepackage{pgfplots}
\usepackage{amssymb}
\usepackage[utf8]{inputenc}
\usepackage{tikz}
\usetikzlibrary{positioning}
\usetikzlibrary{matrix,arrows,decorations.pathmorphing}
\usetikzlibrary{patterns}
\usetikzlibrary{through,calc,3d}
\usetikzlibrary{plotmarks}
\usetikzlibrary{pgfplots.groupplots}
\usepackage{varioref}

\usepackage{microtype}\usepackage{mathtools}

\usepackage{subfigure}                  

\usepgfplotslibrary{external}
\tikzexternalize
\newtheorem{definition}{\bf Definition}[section]
\newtheorem{theorem}{\bf Theorem}[section]
\newtheorem{lemma}{\bf Lemma}[section]


\newcommand{\inner}[2]{\ensuremath{\big< #1, #2 \big>}}
\newcommand{\id}{\mathrm d}

\setlength{\topmargin}{-0.5in}
\setlength{\textheight}{9.0in}
\addtolength{\evensidemargin}{-0.50in}
\addtolength{\oddsidemargin}{-0.50in}
\addtolength{\textwidth}{1.00in}
\setlength{\parindent}{1.75em}
\setlength{\parskip}{0ex}
\setcounter{tocdepth}{2}

\usepackage[normalem]{ulem}

\begin{document}

\title{Reduced-order Description of Transient Instabilities and Computation of Finite-Time Lyapunov Exponents}
\author{Hessam Babaee$^{1}$, Mohamad Farazmand$^1$,  George Haller$^2$,\\ Themistoklis P. Sapsis$^1$\thanks{Corresponding
author: {sapsis@mit.edu},
        Tel: (617) 324-7508, Fax: (617) 253-8689%
        }\\
$^{1}$Department of Mechanical Engineering, MIT\\
$^{2}$Department of Mechanical and Process Engineering, ETH Zurich
 }
\date{\today}


\maketitle
\begin{abstract}
High-dimensional chaotic dynamical systems can exhibit strongly transient features. These are often associated with  instabilities that have finite-time duration. Because of the finite-time character of these transient events, their detection through
infinite-time methods, e.g. long term averages, Lyapunov exponents or information about the statistical steady-state, is 
not possible. Here we utilize a recently developed framework, the Optimally Time-Dependent (OTD) modes, to extract a time-dependent subspace that spans the modes associated with transient features  associated with finite-time instabilities. As the main result, we prove that the OTD modes, under appropriate conditions, converge exponentially fast to the eigendirections of the Cauchy--Green tensor associated with the most intense finite-time instabilities. Based on this observation, we develop a reduced-order method for the computation of finite-time Lyapunov exponents (FTLE) and vectors. In high-dimensional systems, the computational cost of the reduced-order method is orders of magnitude lower than the full FTLE computation. We demonstrate the validity of the theoretical findings
on two numerical examples.

\end{abstract}
\paragraph{Keywords} Finite-time instabilities; Optimally time-dependent modes; Dynamically orthogonal modes; Transient phenomena; Lagrangian coherent structures; finite-time Lyapunov exponents.

\section{Introduction}
 
For a plethora of dynamical systems transient phenomena --- usually associated with finite-time instabilities --- define the most important aspects of the response. Examples include chaotic fluid systems \cite{chandler13, Farazmand2016}, nonlinear waves \cite{cousins_sapsis, cousinsSapsis2015_JFM} and networks \cite{Susuki2012a, Cornelius2013}. For the analysis of such systems and the formulation of prediction and control algorithms it is critical to identify modes associated with these strongly transient features. A first important challenge towards identifying these modes is the high-dimensionality of the response. This high-dimensionality does not necessarily imply that the transient features are also high-dimensional. In fact it is often the case that the modes associated with transient features are very few. However, because of the broad spectrum of the response it is hard to identify them using energy-based criteria. An additional challenge is associated with the intrinsically time-dependent character of these features which is, in general, non-periodic and makes it essential to consider arbitrary time-dependence for these modes as well. 

In the context of dynamical systems numerous approaches have been developed to characterize the transient features associated with non-periodic behavior. The notion of Lagrangian Coherent Structures (LCS) emerged from the fluid dynamics problem of mixing as a general method to characterize dynamical systems with arbitrary time-dependence \cite{haller00, Haller01a, Lekien07}. The method quantifies finite-time instabilities through the explicit computation of the maximum eigenvalue of the Cauchy--Green tensor on every location of the phase space. For low-dimensional systems it provides a complete and unambiguous description of the finite-time instabilities by identifying parts of the phase space associated with such responses. However, given the fast growth of the computational cost with respect to the phase space dimensionality it is not possible to apply it for very high- or infinite-dimensional systems in order to characterize finite-time instabilities. 

For such problems, other approaches focusing more on the description of the system attractor rather than the whole phase space have also been developed in the context of uncertainty quantification and model order reduction. Specifically, the notion of dynamical orthogonality \cite{SapsisLermusiaux09, SapsisLermusiaux10, Hou13a, Hou13b} allows for the computation of time-dependent, dynamically orthogonal (DO) modes that lead to reduced-order description of chaotic attractors with low-intrinsic dimensionality but with strong time-dependence \cite{sapsis11a, sapsisdijkstra}. The same notion can be combined with stochastic closures \cite{sapsis_majda_mqg, sapsis_majda_tur} in order to quantify statistics in time-dependent subspaces for turbulent dynamics systems with very broad spectra (or high intrinsic dimensionality) and strongly transient features \cite{sapsis_majda_mqgdo, Majda2014}. In all of these studies the effectiveness of the derived DO modes on capturing transient responses was demonstrated numerically and the very few theoretical studies related to this problem were constrained in very specific setups. One such case is the study of linear parabolic (stable) equations in \cite{nobile15} where it was shown that the DO modes tend to capture the most energetic directions. However, there is no general result indicating where the DO modes converge for an arbitrary (nonlinear) dynamical system.

To circumvent this limitation a minimization principle was introduced in \cite{Babaee} that allows for the derivation of equations that evolve a time-dependent set of modes, the Optimally-Time Dependent (OTD) modes, along a given trajectory of the system. These modes are identical with those obtained from the DO equations in the deterministic limit, i.e. for the case where the stochastic energy goes to zero. To this end, the OTD modes can be seen as the deterministic analog of DO modes. For sufficiently long times where the system reaches an equilibrium it was proven in \cite{Babaee} that the OTD modes converge to the most unstable directions of the system in the asymptotic limit. Moreover, the same modes were utilized in \cite{Farazmand2016} to formulate predictors of extreme events for Navier-Stokes equations and nonlinear waves problems. 

Despite their success on numerically capturing transient phenomena associated with finite-time instabilities the exact relation between OTD modes (or DO modes) and finite-time instabilities remains an open problem. In this work we provide a rigorous link between the OTD modes and finite-time instabilities, by showing that under general conditions the OTD modes converge exponentially fast to the eigendirections of the Cauchy--Green tensor associated with the largest eigenvalues, i.e. with the largest finite-time Lyapunov exponents. Thus, we prove that the OTD modes are able to extract over a finite-time interval the modes associated with the most intense finite-time instabilities or the most pronounced transient phenomena. Note, that such convergence does not depend on the dimensionality of the system nor the intrinsic dimensionality of the attractor. 

Apart from the fundamental implications of this result on the formulation of reduced-order algorithms for the prediction and control of transient phenomena there is a direct application on the computation of the maximum finite-time Lyapunov exponents for high- or infinite-dimensional systems. Specifically, using the OTD modes we formulate a reduced-order framework that allows, under mild conditions, the computation of finite-time Lyapunov Exponents for general dynamical systems without having to compute the full Cauchy--Green tensor. We demonstrate the derived algorithm in two systems, the Arnold-Beltrami-Childress (ABC) flow and the Six-dimensional Charney-DeVore model with regime transitions. In both cases we examine the effectiveness of the reduced-order method and study limitations and applicability.

\section{Preliminaries and Notation}
We consider the system of differential equations
\begin{equation}\label{eq:dyn_sys}
\dot{z} = f(z,t), \quad z \in \mathbb{R}^n, \quad t\in I=[t_0,t_0+T],
\end{equation}
where $f:U \times I \rightarrow \mathbb{R}^n$ is a sufficiently smooth vector field. 
Let $F_{t_0}^t$ with $t\in I$ denote the associated flow map,
\begin{align}
F_{t_0}^t : &U \rightarrow U \\ \nonumber
            &z_0 \mapsto z(t;t_0,z_0),
\end{align}
where $z(t;t_0,z_0)$ is a trajectory of system~\eqref{eq:dyn_sys} with the initial condition $z_0$.
The linearized system around the trajectory $z(t)\equiv z(t;t_{0},z_0)$ satisfies the equation of variations,
\begin{equation}\label{eq:lin_dyn_sys}
\dot{v} = L( z(t),t) v, \quad v(t) \in \mathbb{R}^n, \quad t\in I=[t_0,t_0+T],
\end{equation}
where $L(z,t):=\nabla_z f(z,t)$.
The deformation gradient $\nabla F^t_{t_0}$ is the fundamental solution matrix for the linearized dynamics
such that the solutions $v(t)\equiv v(t;t_0,v_0)$ of the equation of variations satisfy
\begin{equation}\label{eq:fund_sol_matrix}
v(t) = \nabla F_{t_0}^t(z_0) v_0, \quad \mbox{with}  \quad t\in I=[t_0,t_0+T].
\end{equation}

\subsection{Finite-time Lyapunov exponents}
To measure the growth of infinitesimal perturbations in the phase space we use 
the \emph{right Cauchy–Green strain tensor}
 \begin{equation*}
C_{t_0}^t = (\nabla F_{t_0}^t)^T \nabla F_{t_0}^t,
\end{equation*}
and the \emph{left Cauchy–Green strain tensor}
\begin{equation}
B_{t_0}^t = \nabla F_{t_0}^t (\nabla F_{t_0}^t)^T,
\end{equation}
where $T$ denote matrix transposition. Let $\xi(t,t_0,z_0)\in\mathbb R^n$
denote the eigenvectors of the right Cauchy--Green strain tensor, so that 
 \begin{equation*}
C_{t_0}^t(z_0) \xi_i(t,t_0,z_0)= \lambda_i(t,t_0,z_0) \xi_i(t,t_0,z_0),\qquad i=1,\cdots, n,
\end{equation*}
where $\lambda_i(t,t_0,z_0)$ are the corresponding eigenvalues.
Similarly, denote the eigenvectors of the left Cauchy--Green strain tensor by $\eta_i(t,t_0,z_0)\in\mathbb R^n$
and their corresponding eigenvalues by $\mu_i(t,t_0,z_0)$, so that
 \begin{equation*}
B_{t_0}^t(z_0) \eta_i(t,t_0,z_0) = \mu_i(t,t_0,z_0) \eta_i(t,t_0,z_0),\quad i=1,\cdots, n.
\end{equation*}
When no confusion may arise, we omit the dependence of the eigenvalues and eigenvectors on $(t,t_0,z_0)$ for notational simplicity.
Since the Cauchy--Green strain tensors are symmetric and positive definite, 
their eigenvalues are real and positive, $\lambda_i, \mu_i \in \mathbb{R}^+$.
Furthermore, the eigenvectors are orthogonal, i.e.,
 \begin{equation*}
\big< \xi_i, \xi_j \big > = \big< \eta_i, \eta_j \big > = \delta_{ij}, \quad i,j=1,\dots,n,
\end{equation*}
where $\langle \cdot,\cdot\rangle$ denotes the Euclidean inner product.

It is straightforward to show that the right and left tensors have the same eigenvalues, $\lambda_i=\mu_i$. 
Specifically, considering the definition of the right Cauchy--Green tensor eigenvalues,
$(\nabla F_{t_0}^t)^T \nabla F_{t_0}^t\xi_i= \lambda_i \xi_i$,
we multiply the equation with $\nabla F_{t_0}^t$ from the left to obtain
\begin{equation}
B_{t_0}^t \left(\nabla F_{t_0}^t\xi_i\right)= \lambda_i \left(\nabla F_{t_0}^t\xi_i\right).
\end{equation}
Thus, the left and right Cauchy--Green strain tensors have the same set of eigenvalues. Finally, using the singular value decomposition of
the deformation gradient, one can show the well-known relation that
 \begin{equation}\label{eq:eigendecom_sol}
\nabla F_{t_0}^t \xi_i=\sqrt{\lambda_i} \eta_i, \quad \quad i=1,\dots, n,
\end{equation}
(see, e.g.,~\cite{karrasch2014attraction}). In the following, we order the Cauchy--Green eigenvalues in a descending order,
\begin{equation}
\lambda_1 > \lambda_2 > \dots > \lambda_n>0.
\end{equation}
The finite-time Lyapunov exponents (FTLEs) of system~\eqref{eq:dyn_sys} corresponding to the trajectory $z(t;t_0,z_0)$
and the time interval $[t_0,t_0+T]$ are defined as
\begin{equation}
\Lambda_i(t_0+T,t_0,z_0)=\frac{1}{T}\log \sqrt{\lambda_i(t_0+T,t_0,z_0)},\quad i=1,2,\cdots,n.
\end{equation}

Note that FTLEs are well-defined for any finite integration time $T$. 
Under certain assumptions, the limit $T\to\infty$ also exists~\cite{ott2008}.

\subsection{Optimally Time-Dependent (OTD) modes}
We  give an overview of the  OTD modes \cite{Babaee} for general dynamical systems. 
The OTD modes represent a reduced-order set of $r$ time-dependent orthonormal  
modes, $U(t)=[u_{1}\left(t\right),u_{2}\left(t\right),...,u_{r}\left(t\right)]$,
that minimize the difference between the action of the infinitesimal propagator $\nabla F^{t+\delta t}_{t}$ on $u_i(t)$ and 
its value at time $t+\delta t$. More specifically, we seek to minimize the functional
\begin{align}\label{eq:functional_0}
\mathcal{F} =\frac{1}{\left(\delta t\right)^2} \sum_{i=1}^{r} \big\|u_i(t+\delta t)- \nabla F_{t}^{t+\delta t}u_i(t)\big\|^2, \quad \quad \delta t \rightarrow 0,
\end{align}
with the constraint that the time-dependent basis satisfies the orthonormality conditions
\begin{align}
\left\langle u_{i}\left(t\right), u_{j}\left(t\right)  \right\rangle=\delta_{ij},\qquad i,j=1,...,r.
\label{eq:orthono}
\end{align} 
Using Taylor series expansions, we have
\begin{align}
u_i(t+\delta t) &= u_i(t)+\delta t\; \dot{u}_i + \mathcal{O}(\delta t^2),\\
\nabla F_{t}^{t+\delta t} & = I+\delta t\; L(z(t),t) + \mathcal{O}(\delta t^2),
\end{align}
where $I$ is the identity matrix.
Replacing the above equations into the functional (\ref{eq:functional_0}) results in
\begin{equation}
\mathcal{F}(\dot u_1,\dot u_2, \dots,\dot u_r) 
 =\sum_{i=1}^{r}\left\Vert \frac{\partial u_{i}\left(t\right)}{\partial
t}-L({z({t)},t)u_i(t)}\right\Vert ^{2}.
\label{eq:functional}
\end{equation}

We emphasize that the minimization of the function (\ref{eq:functional}) is considered only with respect to the time-derivative (rate of change) of the basis, $\dot U(t),$ instead of the basis $U(t)$ itself. This is because we do \textit{not} want to optimize the subspace that the operator is acting on, but rather find an optimal set of vectors, $\dot U(t),$ that best approximates the  linearized dynamics in the subspace $U$. We then solve the resulted equations and compute $U(t)$. We will refer to these modes as the \textit{optimally time-dependent} (OTD) modes, and the space that these modes span as the OTD subspace.
By utilizing the minimization principle and taking into account the orthonormality constraint we obtain the following theorem (proved in \cite{Babaee}).
\begin{theorem} 
A one parameter family of vectors $u_i(t)\in\mathbb R^{n}$, $i=1,2,\cdots,r$, minimizes the
functional (\ref{eq:functional}) and satisfies the orthonormality condition~\eqref{eq:orthono}
if and only if
\begin{equation}
\frac{\partial U}{\partial t} =L{U}-UU^{T}LU,
\label{eq:DO_basis}
\end{equation}
where $U(t)=[u_{1}\left(t\right),u_{2}\left(t\right),...,u_{r}\left(t\right)]\in\mathbb R^{n\times r}$.
\end{theorem}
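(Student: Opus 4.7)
The plan is to minimize the quadratic functional $\mathcal F(\dot U) = \|\dot U - LU\|_F^2 = \sum_i \|\dot u_i - L u_i\|^2$ over all $\dot U$ compatible with the orthonormality of $U(t)$. The first step is to translate the constraint $U^T U = I$ into an infinitesimal form: differentiating in time yields $U^T \dot U + \dot U^T U = 0$, so $U^T \dot U$ must be skew-symmetric. Orthonormality alone therefore leaves a skew-symmetric matrix worth of freedom in $\dot U$ (corresponding to rotations of the basis within $\mathrm{span}(U)$ that preserve the subspace but merely relabel its elements); following the DO convention, I would fix this residual gauge by imposing $U^T \dot U = 0$.

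The main calculation decomposes $\dot U - LU$ orthogonally with respect to the projector $UU^T$. Using $\|UM\|_F = \|M\|_F$ whenever $U^T U = I$, the functional splits as
\begin{equation*}
\mathcal F(\dot U) = \|(I - UU^T)(\dot U - LU)\|_F^2 + \|U^T \dot U - U^T LU\|_F^2 .
\end{equation*}
The first summand depends only on the normal part $(I-UU^T)\dot U$, which orthonormality leaves unrestricted, and is annihilated by the choice $(I - UU^T)\dot U = (I - UU^T) LU$. Imposing the DO gauge $U^T\dot U = 0$ on the tangential part then selects the unique
\begin{equation*}
\dot U = (I - UU^T) LU = LU - UU^T L U,
\end{equation*}
which is exactly the stated ODE. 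Equivalently, a Lagrange-multiplier derivation with a symmetric multiplier matrix $\Lambda$ for the symmetric-part constraint on $U^T \dot U$ yields the stationarity condition $\dot U = LU - U\Lambda$; the DO gauge then forces $\Lambda = U^T L U$ and recovers the same equation.

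For the converse, I would verify that any $U(t)$ satisfying $\dot U = LU - UU^T LU$ with $U(t_0)^T U(t_0) = I$ remains orthonormal, since
\begin{equation*}
\frac{d}{dt}(U^T U) = \dot U^T U + U^T \dot U = 2\, U^T (I - UU^T) LU = 2\,(U^T LU - U^T LU) = 0,
\end{equation*}
so $U^T U \equiv I$ for all $t$; substituting $\dot U = LU - UU^T LU$ back into the decomposed form of $\mathcal F$ then confirms that the first summand vanishes and the minimum is attained under the DO gauge. I expect the main obstacle to be the transparent handling of the gauge ambiguity: orthonormality by itself does not pin down $\dot U$, and the precise form $-UU^T LU$ (as opposed to $-\tfrac12 U(U^T LU + U^T L^T U)$, which would drop out of pure minimization) is intrinsically tied to the DO gauge. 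A careful proof should state this choice upfront rather than let it enter implicitly through the variational calculation.
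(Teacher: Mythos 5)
The paper itself contains no proof of this theorem --- the text defers entirely to \cite{Babaee} --- so your argument must stand on its own, and the difficulty you flag in your closing paragraph is not a presentational nicety but a genuine gap between what you prove and what the theorem asserts. Your setup is correct: orthonormality of $U(t)$ constrains only $U^T\dot U$ to be skew-symmetric, and the Pythagorean split of $\mathcal F$ relative to the projector $UU^T$ rightly forces the normal part $(I-UU^T)\dot U=(I-UU^T)LU$. But the tangential part is not ``left free by minimization and then fixed by a gauge'': the term $\|U^T\dot U-U^TLU\|^2$ genuinely depends on the skew-symmetric matrix $A=U^T\dot U$, and its minimum over skew-symmetric $A$ is attained at $A=\tfrac12\left(U^TLU-U^TL^TU\right)$, not at $A=0$. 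Hence the unique minimizer of \eqref{eq:functional} subject to \eqref{eq:orthono} is $\dot U=LU-\tfrac12 U\left(U^TLU+U^TL^TU\right)$, which coincides with \eqref{eq:DO_basis} only when $U^TLU$ is symmetric. Imposing $U^T\dot U=0$ as an extra hypothesis therefore proves a different statement; read literally, the solution of \eqref{eq:DO_basis} is in general \emph{not} a minimizer of \eqref{eq:functional}, so the ``only if'' direction is exactly what your argument cannot deliver, and your Lagrange-multiplier variant has the same issue (the multiplier is pinned to the symmetric part of $U^TLU$, not to all of it).

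The way to close the gap is to prove the statement at the level of subspaces, which is how \cite{Babaee} and the remainder of this paper actually use it. All velocities $\dot U=(I-UU^T)LU+UA(t)$ with $A$ skew-symmetric --- a family containing both the true minimizer and \eqref{eq:DO_basis} --- generate the same time-dependent subspace: if $U_1,U_2$ solve the equations with gauges $A_1,A_2$ from the same initial basis, then $U_2=U_1Q$ where $Q$ solves $\dot Q=QA_2-A_1Q$, $Q(t_0)=I$, and remains orthogonal. With that lemma in hand, the honest version of the theorem is that an orthonormal family minimizes \eqref{eq:functional} if and only if it spans the subspace generated by \eqref{eq:DO_basis}, i.e.\ equivalence in the sense of the paper's Definition 3.1; this lemma must be stated explicitly rather than having the gauge smuggled in as a hypothesis. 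A smaller point: your verification that \eqref{eq:DO_basis} preserves orthonormality is circular as written, since you use $U^TU=I$ to conclude $\tfrac{d}{dt}(U^TU)=0$. Set $M=U^TU-I$ instead; then $\dot M=-U^TL^TUM-MU^TLU$ is linear in $M$ with $M(t_0)=0$, so $M\equiv 0$ by uniqueness.
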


Equation \eqref{eq:DO_basis} is the evolution equation for the OTD modes.
It was proven in \cite{Babaee} that for the case of a time independent operator, $L$, the subspace
spanned by the columns of $U(t)$ converges asymptotically to the modes 
associated with the most unstable directions of the operator $L$. 

The OTD modes were also used to capture 
the transient non-normal growth of instabilities. In figure~\ref{fig_low}, we recall a system from~\cite{Babaee} 
exhibiting a non-normal growth (cf.~\cite{Babaee} for the details). 
A typical trajectory of the system, shown in the left panel, has an almost periodic behavior where each cycle exhibits three distinct regimes: (A) an exponential  growth in 
the $z_3$ direction, (B) a non-normal growth in the $z_1-z_2$ plane with simultaneous decay in the $z_3$ direction and, (C) exponential decay in the $z_1-z_2$ plane  to 
the origin. This configuration allows for the repeated occurrence of exponential (regime A) and non-normal (regime B) instabilities.

Due to the exponential instability close to the origin the system undergoes chaotic transitions between positive and negative values of $z_3$. We compute the instantaneous growth rate corresponding to the direction of a single OTD mode. The OTD mode initially captures the severe exponential growth and subsequently captures the non-normal growth. On the other hand the real part of the eigenvalues of the full linearized operator can only capture the exponential growth, even in regimes where it is not relevant, while they completely miss the non-normal growth. The maximum largest singular value $\sigma$ of the matrix $L$ exhibits a similar behavior.

\begin{figure}
\centering
\includegraphics[width=\textwidth]{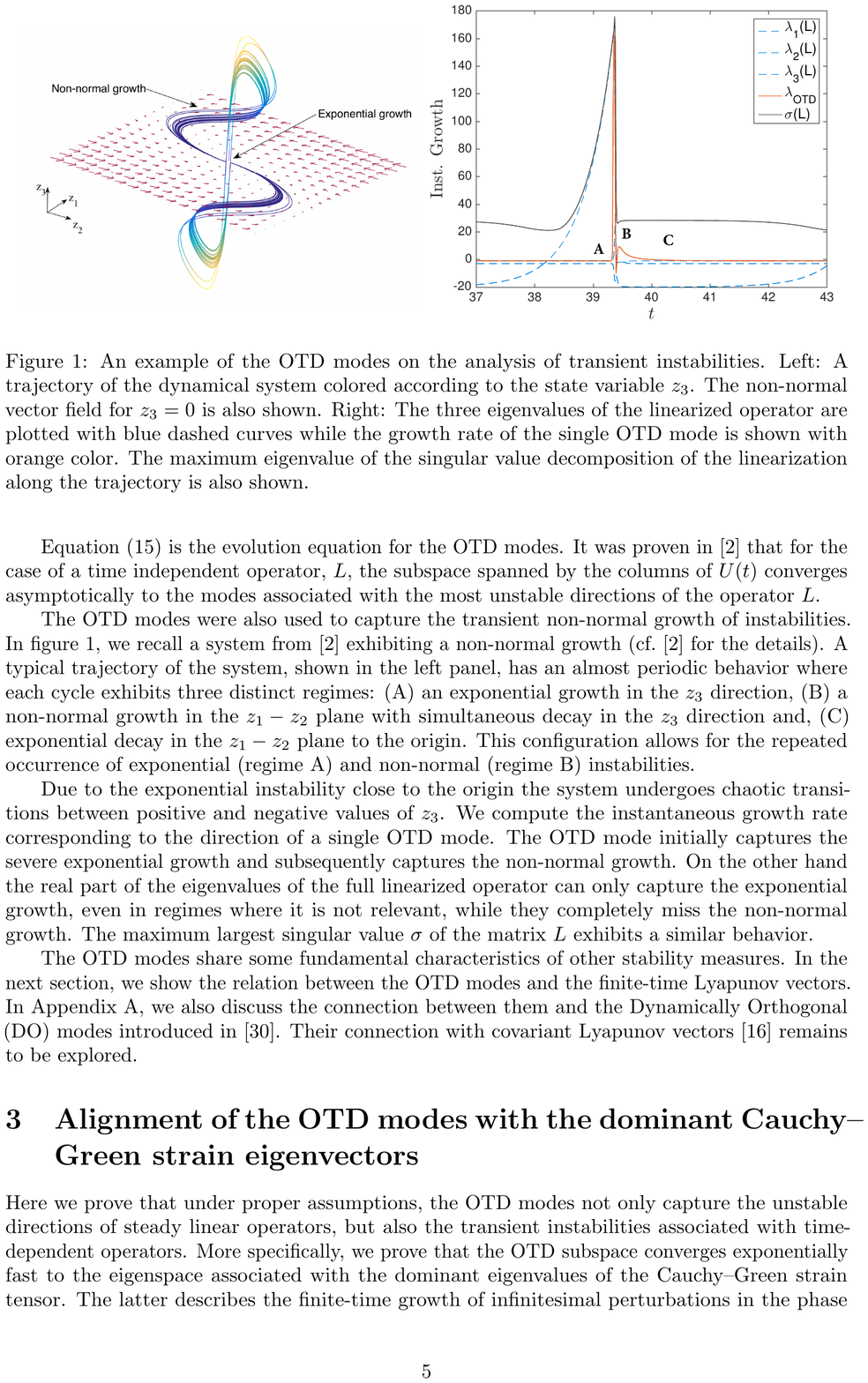}
\caption{An example of the OTD modes on the analysis of transient instabilities. Left: A trajectory of the  dynamical system colored according to the state variable $z_3$. The non-normal vector field for $z_3=0$ is also shown. Right: The three eigenvalues of the linearized operator are plotted with blue dashed curves while the growth rate of the single OTD mode is shown with orange color. The  maximum eigenvalue of the singular value decomposition of the linearization along the trajectory is also shown.} 
\label{fig_low}
\end{figure}

The OTD modes share some fundamental characteristics of other stability measures. In the next section,
we show the relation between the OTD modes and the finite-time Lyapunov vectors.
In Appendix A, we also discuss the connection between them and the Dynamically Orthogonal (DO) modes
introduced in~\cite{SapsisLermusiaux09}. Their connection with
covariant Lyapunov vectors~\cite{ginelli07} remains to be explored. 

\section{Alignment of the OTD modes with the dominant Cauchy--Green strain eigenvectors}
Here we prove that under  proper assumptions, the OTD modes not only capture the unstable directions of steady linear operators, but also the transient  instabilities associated with time-dependent operators. More specifically, we prove that the OTD subspace converges exponentially fast to the eigenspace associated with the
dominant eigenvalues of the Cauchy--Green strain tensor. The latter describes the finite-time growth of infinitesimal perturbations in the phase space.   
For what follows we will need to measure the distance between two
subspaces of the same dimensionality. To this end we need the following definitions. 
\begin{definition}
Two $r$-dimensional linear subspaces of $\mathbb{R}^{n}$ 
spanned by the columns of $U  \in \mathbb{R}^{n\times r}$ and  $V  \in \mathbb{R}^{n\times r}$ 
are equivalent if there is an invertible matrix $R \in \mathbb{R}^{r\times r}$ 
such that $U=VR$.
\end{definition}

The next definition provides a quantity for measuring the `angle' between two subspaces.

\begin{definition}\label{def:subspace_dist}
For two $r$-dimensional linear subspaces of $\mathbb{R}^{n}$ spanned by the
columns of 
$U=[u_{1},u_{2},...,u_{r}]  \in \mathbb{R}^{n\times r}$ and 
$V=[v_{1},v_{2},...,v_{r}]  \in \mathbb{R}^{n\times r}$, we define the distance
function
\begin{equation}
\gamma_{U,V}=\frac{\Vert U^T V \Vert}{r^{1/2}},
\end{equation}
where $\Vert \cdot \Vert$ denotes the Frobenius norm.
\end{definition}

Note that the entries of $U^T V$ are the inner products between $u_i$'s and
$v_j$'s, i.e., $(U^TV)_{ij}=\inner{u_i}{v_j}$.
The following lemma
shows that the equivalence of two subspaces can be deduced from the distance
$\gamma_{U,V}$.
\begin{lemma}\label{lemma1}
Given a subspace defined by the orthonormal columns of $U\in \mathbb{R}^{n\times r}$, and a subspace defined by the unit-length (but not necessarily orthonormal) columns of $V\in \mathbb{R}^{n\times r}$,  
then we always have $\gamma_{U,V} \le 1$. Moreover, the two subspaces spanned by the columns of $U$ and $V$ 
are equivalent if and only if $\gamma_{U,V}=1$.
 \end{lemma}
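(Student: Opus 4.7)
The plan is to reduce everything to the orthogonal projector $P := UU^T$ onto the column span of $U$. Because the columns of $U$ are orthonormal, $P$ is symmetric, idempotent, and non-expansive. I would first rewrite the Frobenius norm as
\begin{equation*}
\|U^T V\|^2 = \operatorname{tr}(V^T U U^T V) = \sum_{i=1}^{r} v_i^T U U^T v_i = \sum_{i=1}^{r} \|P v_i\|^2,
\end{equation*}
so that $\gamma_{U,V}^2$ becomes exactly the average of the squared projection lengths $\|P v_i\|^2$. This reformulation is the key move: it replaces the algebraic object $U^T V$ with a geometric quantity (how much of each $v_i$ lives in the range of $U$).

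From this identity, the bound $\gamma_{U,V} \le 1$ is immediate: by non-expansivity of $P$, each $\|P v_i\| \le \|v_i\| = 1$, and summing over $i$ yields $\|U^T V\|^2 \le r$.

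For the equality characterization I would argue both implications separately. If $\gamma_{U,V} = 1$, then every inequality $\|P v_i\| \le \|v_i\|$ must be tight, so each $v_i$ lies in the column span of $U$; writing $r_i := U^T v_i$ gives $v_i = P v_i = U r_i$, i.e., $V = U R$ with $R := U^T V \in \mathbb{R}^{r \times r}$. Since the columns of $V$ span an $r$-dimensional subspace, they are linearly independent, so $R$ must have rank $r$ and hence is invertible, yielding $U = V R^{-1}$ and establishing equivalence per Definition 3.1. Conversely, if the subspaces are equivalent, then $V = U S$ for some invertible $S$; orthonormality of $U$ gives $\|v_i\| = \|s_i\|$, so the unit-length hypothesis forces $\|s_i\| = 1$, and $U^T V = S$ yields $\|U^T V\|^2 = \sum_{i=1}^{r} \|s_i\|^2 = r$, i.e., $\gamma_{U,V} = 1$.

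The only delicate step is the invertibility of $R$ in the forward implication. It rests on the tacit assumption that the columns of $V$ genuinely span an $r$-dimensional subspace; without this, $\gamma_{U,V} = 1$ would only force $\operatorname{span}(V) \subseteq \operatorname{span}(U)$, not equality, and the desired invertible intertwiner would fail to exist.
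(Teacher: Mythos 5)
Your proof is correct and takes essentially the same route as the paper's: both rest on splitting each $v_i$ into its orthogonal projection onto $\mathrm{col}(U)$ plus an orthogonal remainder (your $Pv_i$ and $v_i - Pv_i$ are exactly the paper's $Uh_i$ and $b_i$) and exploiting $\|Pv_i\|^2+\|v_i-Pv_i\|^2=1$. You are in fact slightly more careful than the paper on the forward implication, correctly flagging that invertibility of $R=U^TV$ relies on the standing assumption that the columns of $V$ span a genuinely $r$-dimensional subspace, a point the paper's proof passes over silently.
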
 
\begin{proof}
By the orthogonal projection theorem, there are $h_i\in\mathbb R^r$ and $b_i\in\mathbb R^n$ such that
$v_i=Uh_i+b_i$ and $U^Tb_i=0$ for $i=1,2,\cdots, r$. Note that since $v_i$ are unit length, 
we have 
$$1=\|v_i\|^2=\|h_i\|^2+\|b_i\|^2,\quad i=1,2,\cdots,r.$$
Defining $H=[h_1|\cdots|h_r]$ and $B=[b_1|\cdots|b_r]$, we have $V=UH+B$.
This implies $U^TV = H$ and hence 
$$\|U^TV\|^2 = \|H\|^2=\sum_{i=1}^{r}\|h_i\|^2=r-\sum_{i=1}^{r}\|b_i\|^2=r-\|B\|^2.$$
Therefore, $\gamma_{U,V}^2=1-\|B\|^2/r$. 
Note that the subspaces $\mathrm{col}(U)$  and $\mathrm{col}(V)$ are equal if and only if $\|B\|=0$.
Therefore, $\mathrm{col}(U)=\mathrm{col}(V)$ if and only if $\gamma_{U,V}=1$. 
We also note that if the two subspaces are not equivalent $\gamma_{U,V}<1.$ 
\end{proof}

To prove the main theorem it is more convenient to work with the subspace
spanned by the linearized flow~\eqref{eq:lin_dyn_sys}, instead of the OTD subspace. 
The following result states that the two subspaces are equivalent.

\begin{theorem}
\label{Thm:Trans}
Let $V(t)\in \mathbb{R}^{n\times r}$ solve the equation of variations~\eqref{eq:lin_dyn_sys} 
and $U(t)\in \mathbb{R}^{n\times r}$ with $U^TU=I$ solve the OTD equation~\eqref{eq:DO_basis}.
Assume that the two subspaces spanned by the columns of $U$ and $V$ are initially equivalent, i.e. there
is an invertible matrix $T_0\in\mathbb{R}^{r\times r}$ such that $V_0 = U_0T_0$. 
Then there exists a one-parameter family of linear transformations
$T(t)$ such that $V(t) = U(t)T(t)$ for all $t$ and $T(t)$ satisfies the
differential equation
\begin{equation}\label{eq:Tdot}
\dot{T}=L_r(t)T,\quad T(t_0)=T_0,
\end{equation}
where $L_r=U^T L U$ is the orthogonal projection of $L$ to the linear subspace spanned by the columns of $U$.
\end{theorem}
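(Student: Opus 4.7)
The plan is to construct $T(t)$ explicitly as the solution of a linear matrix ODE, and then verify, via uniqueness of solutions to \eqref{eq:lin_dyn_sys}, that $V(t)$ must coincide with $U(t) T(t)$. Since $U(t)$ is already determined by the OTD equation~\eqref{eq:DO_basis}, the projected operator $L_r(t) = U(t)^T L(z(t),t) U(t)$ is a continuous $r\times r$ matrix-valued function on $I$. I would therefore define $T(t)$ as the unique solution of the linear initial value problem $\dot T = L_r(t) T$ with $T(t_0)=T_0$, whose global existence and uniqueness on $I$ is standard for linear systems with continuous coefficients.

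Next, set $W(t) := U(t) T(t)$ and differentiate, using the OTD equation $\dot U = LU - UU^T L U$ together with the defining ODE for $T$:
\begin{equation*}
\dot W = \dot U\, T + U\, \dot T = (LU - UU^T L U)\,T + U\,(U^T L U)\,T = L U T = L W,
\end{equation*}
the two $UU^T L U\,T$ contributions cancelling exactly. Moreover, $W(t_0) = U_0 T_0 = V_0$. Hence $W$ and $V$ both solve the linear matrix initial value problem $\dot Y = L Y$ with $Y(t_0)=V_0$ (interpreted columnwise), and uniqueness of solutions to linear ODEs immediately yields $W(t) = V(t)$ for every $t \in I$, which is exactly $V(t) = U(t) T(t)$.

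Invertibility of $T(t)$ on $I$ is then inherited from that of $T_0$: the linear ODE $\dot T = L_r T$ preserves nondegeneracy, since $\det T(t) = \det T_0\, \exp\!\int_{t_0}^{t} \operatorname{tr} L_r(s)\, \id s \neq 0$ by Liouville's formula. The only real conceptual subtlety --- and the reason the proof needs the short setup above --- is recognising that one should define $T$ from an independently posed IVP rather than trying to read it off as $T = U^T V$; the latter would require $V\in\operatorname{col}(U)$, which is exactly what the theorem is trying to establish, and is therefore circular. Once the IVP viewpoint is adopted, the proof collapses to a single cancellation followed by ODE uniqueness.
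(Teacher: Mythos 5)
Your proof is correct and complete. Note that the paper itself does not give a proof of this theorem --- it simply cites Theorem 2.4 of the reference by Babaee and Sapsis --- so your argument actually supplies what the text omits. The route you take (define $T$ as the solution of the independently posed IVP $\dot T = L_r T$, verify by the exact cancellation of the $UU^TLU\,T$ terms that $W=UT$ solves the equation of variations with the same initial data, conclude $W=V$ by uniqueness for linear ODEs, and get invertibility of $T(t)$ from Liouville's formula) is the standard and essentially the only natural one, and your observation about avoiding the circular definition $T=U^TV$ is a genuine point of care rather than padding.
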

\begin{proof}
 See Theorem 2.4 in \cite{Babaee}.
\end{proof}

Based on this equivalence, it is  sufficient to study the behavior of the subspace evolved under the
time-dependent linearized dynamics~\eqref{eq:lin_dyn_sys} in order to understand the properties of the OTD subspace.
Specifically, we use this lemma  to prove the main result of this paper, namely, that under a spectral gap condition, 
the OTD subspace will converge to the dominant directions of the Cauchy--Green strain tensor.

\begin{theorem}\label{Thm:Alignment}
Let $\xi_i(t,t_0,z_0)$ and $\eta_i(t,t_0,z_0)$ $(i=1,2, \dots, n)$ denote the eigenvectors 
of the right  and left Cauchy--Green strain tensors, 
respectively, with corresponding eigenvalues  $\lambda_1 (t,t_0,z_0)\geq\ \lambda_2 (t,t_0,z_0)\geq \dots \geq\lambda_n(t,t_0,z_0)$. 
Moreover, let $U(t) \in \mathbb{R}^{n \times r}$ solve the OTD equation~\eqref{eq:DO_basis}
along the trajectory $z(t;t_0,z_0)$. Assume that

i) The subspace $U(t)$ is initiated so that each basis element $u_i(t_0)$ 
has a non-zero projection on at least one of the eigenvectors   
$\xi_i(t,t_0,z_0)$, $i=1,2,\cdots, r$ for all times $t$.

ii) The trajectory $z(t;t_0,z_0)$ is hyperbolic in the sense that there exist constants 
$a_1\geq a_2\geq\cdots\geq a_r>a_{r+1}\geq \cdots\geq a_n$
and $K_i>0$ $(i=1,2,\cdots,n)$ such that
\begin{equation}
\lim_{t\to\infty}\frac{\lambda_i(t,t_0,z_0)}{e^{a_i t}}=K_i.
\end{equation}


Then the subspace $U(t)$ aligns with the $r$ most dominant left Cauchy--Green strain eigenvectors, 
$\eta_i(t,t_0,z_0)$, $i=1,...,r$, exponentially fast as $t\to\infty$. 
\end{theorem}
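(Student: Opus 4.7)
The strategy is to use Theorem 3.1 to replace the OTD subspace by the subspace $\mathrm{col}(V(t))$ spanned by the flow-evolved matrix $V(t)=\nabla F_{t_0}^t V_0$, and then to analyse the latter by expanding in the right Cauchy--Green eigenbasis $\{\xi_i(t)\}$. Since $T(t)$ solves $\dot T=L_r T$ with invertible datum $T_0$, it remains invertible for all $t\ge t_0$, so $\mathrm{col}(U(t))=\mathrm{col}(V(t))$ throughout; it therefore suffices to show that $\mathrm{col}(V(t))$ converges exponentially fast to $\mathrm{col}(E(t))$, where $E(t):=[\eta_1(t)|\cdots|\eta_r(t)]$ has orthonormal columns because $B_{t_0}^t$ is symmetric.

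Write $V_0=[\xi_1(t)|\cdots|\xi_n(t)]A(t)$ and apply the flow column-wise. The identity $\nabla F_{t_0}^t\xi_i=\sqrt{\lambda_i}\,\eta_i$ gives $V(t)=[\eta_1|\cdots|\eta_n]\,D(t)\,A(t)$ with $D=\mathrm{diag}(\sqrt{\lambda_1},\ldots,\sqrt{\lambda_n})$. Splitting $D=\mathrm{diag}(D_r,D_s)$, $A=(A_r^{T},A_s^{T})^{T}$, and $[\eta_1|\cdots|\eta_n]=[E\,|\,E_\perp]$ into a dominant block ($i\le r$) and a subdominant block ($i>r$), and assuming the key condition that $A_r(t)\in\mathbb R^{r\times r}$ is nonsingular (my intended reading of hypothesis (i)), one obtains the factorisation
\begin{equation*}
V(t)=\bigl(E(t)+E_\perp(t)\,Y(t)\bigr)\,D_r(t)\,A_r(t),\qquad Y(t):=D_s(t)\,A_s(t)\,A_r(t)^{-1}\,D_r(t)^{-1}.
\end{equation*}
Since $D_rA_r$ is invertible, $\mathrm{col}(V(t))=\mathrm{col}(E(t)+E_\perp(t)Y(t))$, and the principal-angle distance between this subspace and $\mathrm{col}(E(t))$ is controlled by $\|Y(t)\|$.

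The hyperbolic asymptotics $\sqrt{\lambda_i(t)}=\sqrt{K_i}\,e^{a_it/2}(1+o(1))$ show that every entry of $Y$ is bounded by a constant times $\sqrt{\lambda_{r+k}/\lambda_j}$ for some $k\ge 1$ and $j\le r$, and all such ratios decay at least as fast as $e^{-(a_r-a_{r+1})t/2}$; this is precisely where the spectral gap $a_r>a_{r+1}$ is used. Hence $\|Y(t)\|\le C\,e^{-(a_r-a_{r+1})t/2}$ for some constant $C$ depending on $\|A_sA_r^{-1}\|$. To cast the conclusion in the paper's notation, take a QR decomposition $V(t)=Q(t)R(t)$; then $\mathrm{col}(Q)=\mathrm{col}(U)$, and from the factorisation a short computation yields $E^TQ=I_r+\mathcal O(\|Y\|^2)$, whence Lemma 3.1 gives $\gamma_{E,Q}(t)\to 1$ exponentially, which is the desired alignment.

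The main obstacle is the careful handling of hypothesis (i). As phrased it is too weak for the argument above: what is actually required is invertibility of the $r\times r$ block $A_r(t)=[\xi_1(t)|\cdots|\xi_r(t)]^T V_0$ (or a uniform-in-$t$ lower bound on $\sigma_{\min}(A_r)$ for large $t$), since without it a direction in $\mathrm{col}(V_0)$ could lie purely in the subdominant eigenspace and the factorisation would fail. A secondary, routine technicality is making the $o(1)$ remainders in the hyperbolic asymptotics uniform enough that the clean exponential rate survives the bookkeeping, which follows from a standard $\varepsilon$-$t_0$ argument using the existence of the limits $K_i>0$.
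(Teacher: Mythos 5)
Your proposal is correct and shares the paper's core strategy --- pass to the flow-evolved subspace via Theorem~\ref{Thm:Trans}, expand the initial condition in the right Cauchy--Green eigenbasis, push forward with $\nabla F_{t_0}^t\xi_i=\sqrt{\lambda_i}\,\eta_i$, and let the spectral gap $a_r>a_{r+1}$ kill the ratio $\lambda_{r+1}/\lambda_r$ --- but the closing step is genuinely different. The paper argues column by column: it computes $\cos\alpha_{v_j,\eta_k}$, assembles the distance $\gamma_{v,\eta}$ of Definition~\ref{def:subspace_dist}, and sends it to $1$ using only the per-column condition $\sum_{k\le r}\inner{v_{0_j}}{\xi_k}^2>0$. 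You instead factor $V=(E+E_\perp Y)\,D_rA_r$ and drive the graph-transform block $Y=D_sA_sA_r^{-1}D_r^{-1}$ to zero, which needs the stronger requirement that $A_r=[\xi_1|\cdots|\xi_r]^TV_0$ be invertible. Your insistence on this stronger reading of hypothesis (i) is not pedantry but an improvement: the per-column condition does not imply subspace convergence. Schematically, for $r=2$ and constant $\xi_i$, the choice $v_{0_1}=\xi_1+\xi_3$, $v_{0_2}=\xi_1+\xi_4$ satisfies (i), yet $\mathrm{col}(V(t))$ converges to $\mathrm{span}(\eta_1,\eta_3)$ rather than $\mathrm{span}(\eta_1,\eta_2)$, while the paper's $\gamma_{v,\eta}$ still tends to $1$ because both columns collapse onto $\eta_1$; Lemma~\ref{lemma1} characterizes the exact equality $\gamma=1$ at a fixed time and does not exclude this degeneracy in the limit. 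So your block factorization both reproduces the paper's conclusion under the corrected hypothesis and quietly closes a gap in its argument; the residual technicality you flag --- a uniform-in-$t$ lower bound on $\sigma_{\min}(A_r(t))$, needed because the $\xi_i(t)$, and hence $A(t)$, are time-dependent --- is likewise real and is only loosely covered by the ``for all times $t$'' clause in the paper's hypothesis (i).
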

 
\begin{proof}
Based on Lemma \ref{Thm:Trans} the subspace spanned by the OTD modes is equivalent to the subspace 
that we obtain if we evolve $V(t)=[v_1(t),...,v_r(t)]$ using the linearized dynamics \eqref{eq:lin_dyn_sys}. 
We represent the initial condition for the equation of variations~\eqref{eq:lin_dyn_sys} as 
$V(t_0)=[v_{0_1},v_{0_2},...,v_{0_r}]$ and express it in terms of the orthonormal strain eigenbasis $\{\xi_i(t,t_0,z_0) \}$ as
\begin{equation}\label{eq:Alignment_aux_1}
v_{0_j} = \sum_{i=1}^n \inner {v_{0_j}}{\xi_i(t,t_0,z_0)} \xi_i(t,t_0,z_0), \quad \ j=1,...,r.
\end{equation}
Then,  equations (\ref{eq:fund_sol_matrix}) and (\ref{eq:eigendecom_sol})  applied to equation (\ref{eq:Alignment_aux_1}) yield
\begin{equation*}
v_{j}(t) = \sum_{i=1}^n\sqrt{\lambda_i(t,t_0,z_0)} \inner {v_{0_j}}{\xi_i(t,t_0,z_0)} \eta_i(t,t_0,z_0), \quad \ j=1,...,r.
\end{equation*}
We then have
\begin{equation*}
\inner {v_{j}(t)}{\eta_k(t,t_0,z_0)} = \sqrt{\lambda_k(t,t_0,z_0)} \inner {v_{0_j}}{\xi_k(t,t_0,z_0)}
, \quad \ j,k=1,...,r.
\end{equation*}
Moreover,
\begin{equation*}
\inner {v_{j}(t)}{v_{j}(t)} = \sum_{i=1}^n{\lambda_i(t,t_0,z_0)}
\inner {v_{0_j}}{\xi_i(t,t_0,z_0)}^2
, \quad \ j=1,...,r.
\end{equation*}
Based on this last equation, the cosine of the angle $\alpha_{v_j,\eta_k}(t,t_0,z_0)$ between
the vector $v_j(t)$ and the  eigenvector $\eta_k(t,t_0,z_0)$ of $B_{t_0}^t(z_0)$  obeys the
relation
\begin{equation}\label{eq:Alignment_aux_2}
\cos{\alpha_{v_j,\eta_k}}= \frac{\inner{v_j(t)}{\eta_k}}{\|v_j(t)\| }
              =\frac{\sqrt{\lambda_k}\inner{v_{0_j}}{\xi_k}}{\sqrt{ \sum_{i=1}^n \lambda_i\inner{v_{0_j}}{\xi_i}^2 }}.
\end{equation}
Identity \eqref{eq:Alignment_aux_2} and definition~\ref{def:subspace_dist} yield
\begin{equation}\label{eq:Alignment_aux_3}
r\gamma_{v,\eta}^2
=\sum_{j=1}^r\sum_{k=1}^r \frac{\lambda_k\inner{v_{0_j}}{\xi_k}^2}{ \sum_{i=1}^n
\lambda_i\inner{v_{0_j}}{\xi_i}^2 }
=\sum_{j=1}^r \frac{\sum_{k=1}^r \lambda_k\inner{v_{0_j}}{\xi_k}^2}{\sum_{i=1}^n
\lambda_i\inner{v_{0_j}}{\xi_i}^2 }
=\sum_{j=1}^r \frac{A_j}{A_j+B_j}.
\end{equation}
where
\begin{align*}
A_j&=\sum_{k=1}^r \lambda_k\inner{v_{0_j}}{\xi_k}^2\geq \lambda_r \sum_{k=1}^r \inner{v_{0_j}}{\xi_k}^2, \\
B_j&=\sum_{i=1}^n\lambda_i\inner{v_{0_j}}{\xi_i}^2\leq \lambda_{r+1} \sum_{i=r+1}^n.
\inner{v_{0_j}}{\xi_i}^2
\end{align*}
Note that assumption (ii) implies $\lim_{t\to\infty}\lambda_{r+1}(t,t_0,z_0)/\lambda_{r}(t,t_0,z_0)=0$ since
\begin{align*}
\lim_{t\to\infty}\frac{\lambda_{r+1}(t,t_0,z_0)}{\lambda_r(t,t_0,z_0)}  &= 
\lim_{t\to\infty}\frac{\lambda_{r+1}(t,t_0,z_0)/e^{a_{r+1}t}}{\lambda_r(t,t_0,z_0)/e^{a_{r}t}}\frac{e^{a_{r+1}t}}{e^{a_{r}t}}\nonumber\\
& = \frac{K_{r+1}}{K_r}\lim_{t\to\infty}e^{(a_{r+1}-a_r)t}.
\end{align*}
As a consequence, we obtain from the last inequality the estimate
\begin{equation}\label{eq:Alignment_aux_4}
r\gamma_{v,\eta}^2
=\sum_{j=1}^r \frac{1}{1+\frac{B_j}{A_j}} \geq 
\sum_{j=1}^r \frac{1}{1+\frac{\lambda_{r+1}}{\lambda_{r}}\frac{ \sum_{i=r+1}^n
\inner{v_{0_j}}{\xi_i}^2}{\sum_{k=1}^r
\inner{v_{0_j}}{\xi_k}^2}}.
\end{equation}
Note that $\sum_{k=1}^r
\inner{v_{0_j}}{\xi_k}^2>0$ from the first assumption. Therefore, for the asymptotic limit $t\to\infty$, we have
\begin{displaymath}
\gamma_{v,\eta}^2 \geq 1.
\end{displaymath}
Note that the way we have defined the two subspaces satisfy the assumptions of Lemma 3.1 and therefore we always have $\gamma_{v,\eta}^2 \leq 1$. 
Thus, $\gamma_{v,\eta}^2=1$ which implies that the two subspaces are equivalent. This completes the proof.
\end{proof}

We point out that the hyperbolicity condition (ii) is not a necessary condition. In fact, as long as the ratio $\lambda_{r+1}(t)/\lambda_r(t)$ tends to 
zero asymptotically, the alignment takes place (see equation~\eqref{eq:Alignment_aux_4}).\textcolor{red}{}

\section{Eigenvalue crossing and OTD modes}
Theorem~\ref{Thm:Alignment} shows that under appropriate assumptions the OTD modes converge exponentially fast to the eigenspace of the left Cauchy--Green
strain tensor associated with the largest Lyapunov exponents. 
This convergence, however, may be interrupted when the smallest eigenvalue spanned by the OTD modes crosses with the one that is not being spanned. 
In figure~\ref{fig:schem_crossing}, we illustrate this situation. The green curve denotes the trajectory of the system, while the ellipses indicate the principal axes of the 
left Cauchy--Green strain tensor. Assuming that we are resolving only one OTD mode, after sufficient time this must have  converged to the correct principal direction (red 
arrow). As we go through the critical time instant where the ellipsoid becomes a circle due to eigenvalue crossing, we observe that the principal directions are 
instantaneously ill-defined. Immediately after the eigenvalue crossing, the dominant principal direction undergoes a $90$ degrees internal rotation compared
to immediately before the crossing. The OTD modes, on the other hand, evolve smoothly and are unable to capture such an instantaneously unbounded transition. 
\begin{figure}
\centering
\includegraphics[width=.68\textwidth]{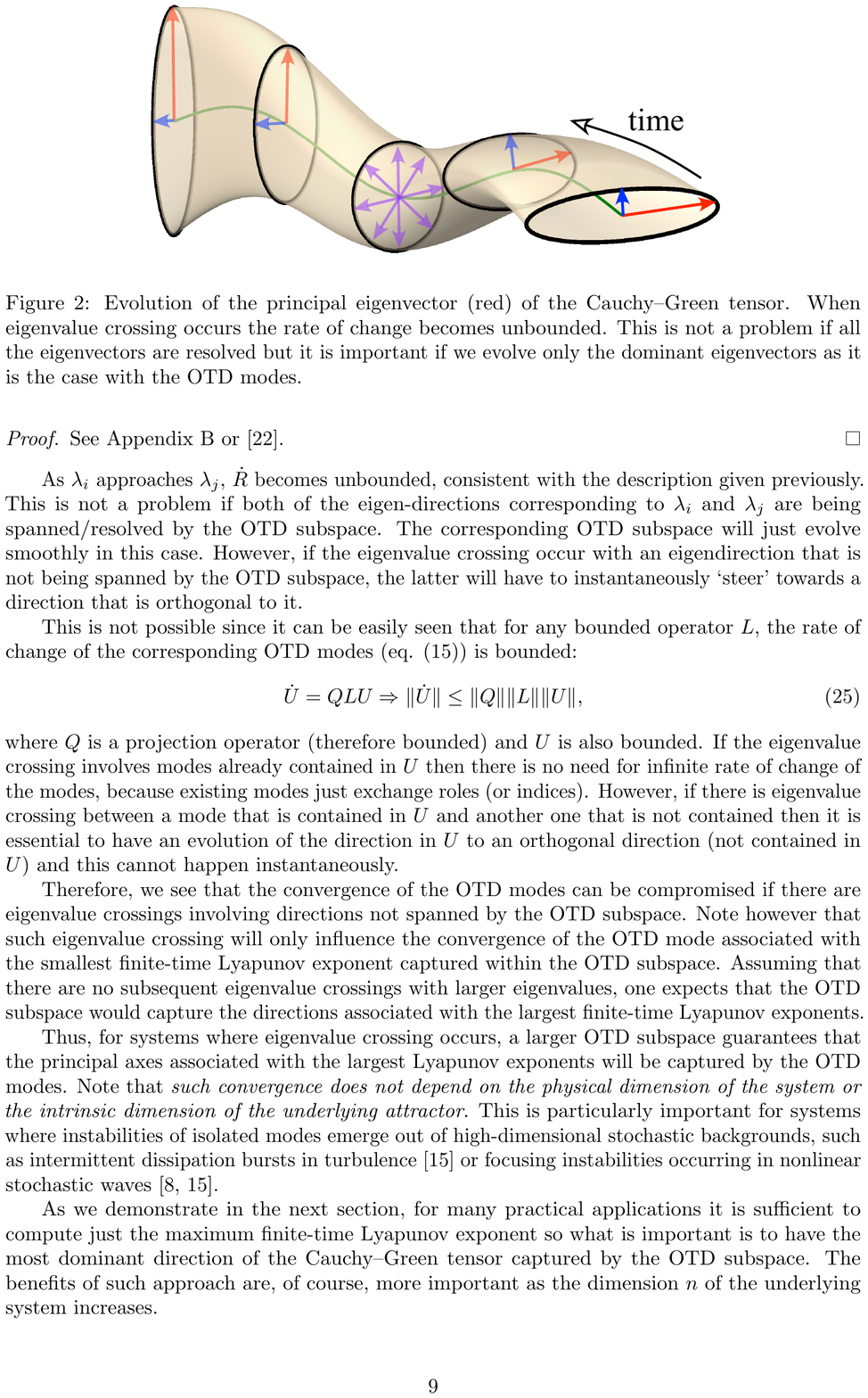}
\caption{Evolution of the principal eigenvector (red) of the Cauchy--Green tensor. When eigenvalue crossing occurs the rate of change becomes unbounded. This is not a problem if all the eigenvectors are resolved but it is important if we evolve only the dominant eigenvectors as it is the case with the OTD modes.}
\label{fig:schem_crossing}
\end{figure}

In the context of shearless Lagrangian coherent structures, these eigenvalue crossings are referred to as the \emph{Cauchy--Green singularities} and play an
important role in the computation of jet cores~\cite{shearless}. In a more general application, the significance of these singularities is 
pointed out by Lancaster~\cite{lancaster} who derived the rate of change of the eigenvectors of an arbitrary symmetric matrix, proving the
following theorem.
\begin{theorem}\label{thm:dotR}
The rate of change of the eigenvectors $R(t) \in \mathbb{R}^{n\times n}$ of a symmetric matrix $G(t)\in \mathbb{R}^{n\times n}$ is described  by the 
equation\begin{equation}
\dot{R} = R K,
\end{equation}
where $K(t)$ is a skew-symmetric matrix given by
\begin{equation}
 K_{ij} (t)= \frac{\widetilde{G}_{ij}
(t)+ \widetilde{G}_{ji}(t)}{2(\lambda_j (t)- \lambda_i(t))}
, \quad i \neq j
\end{equation}
with $\widetilde{G} (t)= R^T(t) \dot G(t)R(t)$  and $\lambda_i(t) $ being the eigenvalues of $G(t)$.
\end{theorem}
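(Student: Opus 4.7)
The plan is to exploit the full eigendecomposition $G(t)=R(t)\Lambda(t)R(t)^T$ with $\Lambda=\mathrm{diag}(\lambda_1,\dots,\lambda_n)$, and to read off $\dot R$ from the compatibility conditions obtained by differentiating the defining identities. Because $R$ is orthogonal, its motion on the orthogonal group is automatically encoded by a skew-symmetric matrix, which is where both the form $\dot R=RK$ and the skew-symmetry of $K$ come from; what remains is to identify the entries of $K$ via the eigenvalue equation.

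First I would differentiate the orthogonality constraint $R^TR=I$ to get $\dot R^TR + R^T\dot R=0$. Setting $K:=R^T\dot R$ this says $K$ is skew-symmetric, and multiplying by $R$ on the left gives $\dot R=RK$, which is precisely the form stated and automatically forces $K_{ii}=0$. Second I would differentiate $GR=R\Lambda$ and left-multiply by $R^T$, using $R^TG=\Lambda R^T$, to obtain
\begin{equation*}
\widetilde G + \Lambda K = K\Lambda + \dot\Lambda, \qquad \widetilde G := R^T \dot G R.
\end{equation*}
For $i\neq j$ the diagonal term $\dot\Lambda$ drops out and this reduces to $\widetilde G_{ij}=K_{ij}(\lambda_j-\lambda_i)$, hence $K_{ij}=\widetilde G_{ij}/(\lambda_j-\lambda_i)$. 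Since $G$ is symmetric so is $\dot G$, and therefore $\widetilde G$, so $\widetilde G_{ij}=\widetilde G_{ji}$ and this expression is identical to the symmetrized form $(\widetilde G_{ij}+\widetilde G_{ji})/[2(\lambda_j-\lambda_i)]$ in the statement; writing it symmetrized makes the skew-symmetry of $K$ transparent at the level of formulas. As a free byproduct the diagonal entries of the same identity give the Hellmann--Feynman-type relation $\dot\lambda_i=\widetilde G_{ii}$, which is not needed here but confirms the calculation is consistent.

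The main obstacle is not computational but structural: the formula divides by $\lambda_j-\lambda_i$, so the derivation requires the eigenvalues to be simple along the interval of interest and $R(t)$ to be chosen as a smooth curve of orthonormal eigenframes, which is possible precisely in the absence of eigenvalue crossings. At a crossing the denominator vanishes, $K_{ij}$ diverges, and the eigenframe itself can fail to be smoothly defined, which is exactly the Cauchy--Green singularity phenomenon illustrated in Figure~\ref{fig:schem_crossing} and the reason this theorem is invoked at this point in the paper.
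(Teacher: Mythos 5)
Your proposal is correct and follows essentially the same route as the paper's Appendix~B proof: differentiate $R^TR=I$ to obtain the skew-symmetric $K=R^T\dot R$, differentiate $GR=R\Lambda$ and left-multiply by $R^T$, then read off the off-diagonal entries. The only cosmetic difference is that you obtain the symmetrized formula by invoking the symmetry of $\widetilde G$ (inherited from $\dot G$), whereas the paper combines the $(i,j)$ and $(j,i)$ equations via the skew-symmetry of $K$; the two steps are equivalent.
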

\begin{proof}
See Appendix B or \cite{lancaster}.
\end{proof}

As $\lambda_i$ approaches $\lambda_j$, $\dot{R}$ becomes unbounded, consistent with the description given previously.  This is not a problem if both of the eigen-directions corresponding to $\lambda_i$ and $\lambda_j$ are being spanned/resolved by the OTD subspace. The corresponding OTD subspace will just evolve smoothly in this case. However, if the eigenvalue crossing occur with an eigendirection that is not being spanned by the OTD subspace, the latter will have to instantaneously `steer' towards a direction that is orthogonal to it. 

This is not possible since it can be easily seen that for any bounded operator $L,$ the rate of change of the corresponding OTD modes (eq. (\ref{eq:DO_basis})) is bounded:
\begin{equation}\label{eqn:OTD_bounded}
\dot{U} = QLU\Rightarrow \|\dot{U} \| \leq \|Q\| \| L\| \|U\|,
\end{equation}
where $Q$ is a projection operator (therefore bounded) and $U$ is also bounded. If the eigenvalue crossing involves modes already contained in $U$ then there is no need for infinite rate of change of the modes, because existing modes just exchange roles (or indices). However, if there is eigenvalue crossing between a mode that is contained in $U$  and another one that is not contained then it is essential to have an evolution of the direction in $U$ to an orthogonal direction (not contained in $U$) and this cannot happen instantaneously. 

Therefore, we see that the convergence of the OTD modes can be compromised if there are  eigenvalue crossings involving directions not spanned by the OTD subspace. Note however that such eigenvalue crossing will only influence the convergence of the OTD mode associated with the smallest finite-time Lyapunov exponent captured within the OTD subspace. Assuming that there are no subsequent eigenvalue crossings with larger eigenvalues, one expects that the OTD subspace would capture the directions associated with the largest finite-time Lyapunov exponents.

Thus, for systems where eigenvalue crossing occurs, a  larger OTD subspace  guarantees that the principal axes associated with the largest Lyapunov exponents will  be captured by the OTD modes. Note that \textit{such convergence does not depend on the physical dimension of the system or the intrinsic dimension of the underlying attractor}. This is particularly important for systems where instabilities of isolated modes emerge out of high-dimensional stochastic backgrounds, such as intermittent dissipation bursts in turbulence \cite{Farazmand2016} or focusing instabilities occurring in nonlinear stochastic waves \cite{cousinsSapsis2015_JFM, Farazmand2016}.

As we demonstrate in the next section, for many practical applications it is sufficient to compute just the maximum finite-time
Lyapunov exponent so what is important is to have the most dominant direction of the Cauchy--Green tensor captured by the OTD subspace. The benefits of such approach  are, of course, more important as the dimension $n$ of the underlying system increases.     
\section{Reduced-order computation of finite-time Lyapunov exponents}
As shown above, the OTD modes are able to extract the transiently most unstable directions independently of the attractor dimensionality.
Consequently, the OTD basis can be used for the reduced-order approximation of finite-time Lyapunov exponents (FLTE) of high-dimensional systems. 
Specifically, we apply the following approximation scheme for the computation of a FTLE corresponding to a time interval of length $T$.
\begin{enumerate}
\item Advect  the trajectory $z(t;t_0,z_0)$ for an interval $t \in[t_0,t_0+T]$ where $z_0$ is the initial point.

\item Compute the $r$-dimensional OTD subspace $U(t)$ corresponding to this trajectory.

\item Compute  the low-dimensional fundamental solution matrix $\Phi_{t_0}^t \in \mathbb{R}^{r\times r}$ using the reduced linear dynamical system
\begin{equation}
\frac{\id}{\id t}\Phi_{t_0}^t = L_{r}( z(t),t)\Phi_{t_0}^t,\quad \Phi_{t_0}^{t_0}=I ,\quad t\in[t_0,t_0+T],
\end{equation}
where $L_{r}(z,t)=U(t)^{T}L(z,t)U(t)$ is the projection of the full linearized operator $L$ onto the OTD subspace, which contains the most unstable directions.
\item
Compute the reduced-order finite-time Lyapunov exponents, 
\begin{equation}
\Gamma_i(t_0+T,t_0,z_0)=\frac{1}{T}\log\sqrt{\gamma_i(t_0+T,t_0,z_0)},\quad i=1,\cdots,r,
\label{reduced_ftle}
\end{equation}
where $\gamma_i$ denotes the 
eigenvalues of the reduced-order right Cauchy--Green strain tensor $\left(\Phi_{t_0}^{t_0+T}\right)^\top \Phi_{t_0}^{t_0+T}$,
ordered such that $\gamma_1\geq \gamma_2\geq\cdots\geq \gamma_n$.
\end{enumerate}
\begin{figure}
\centering
\includegraphics[width=.5\textwidth]{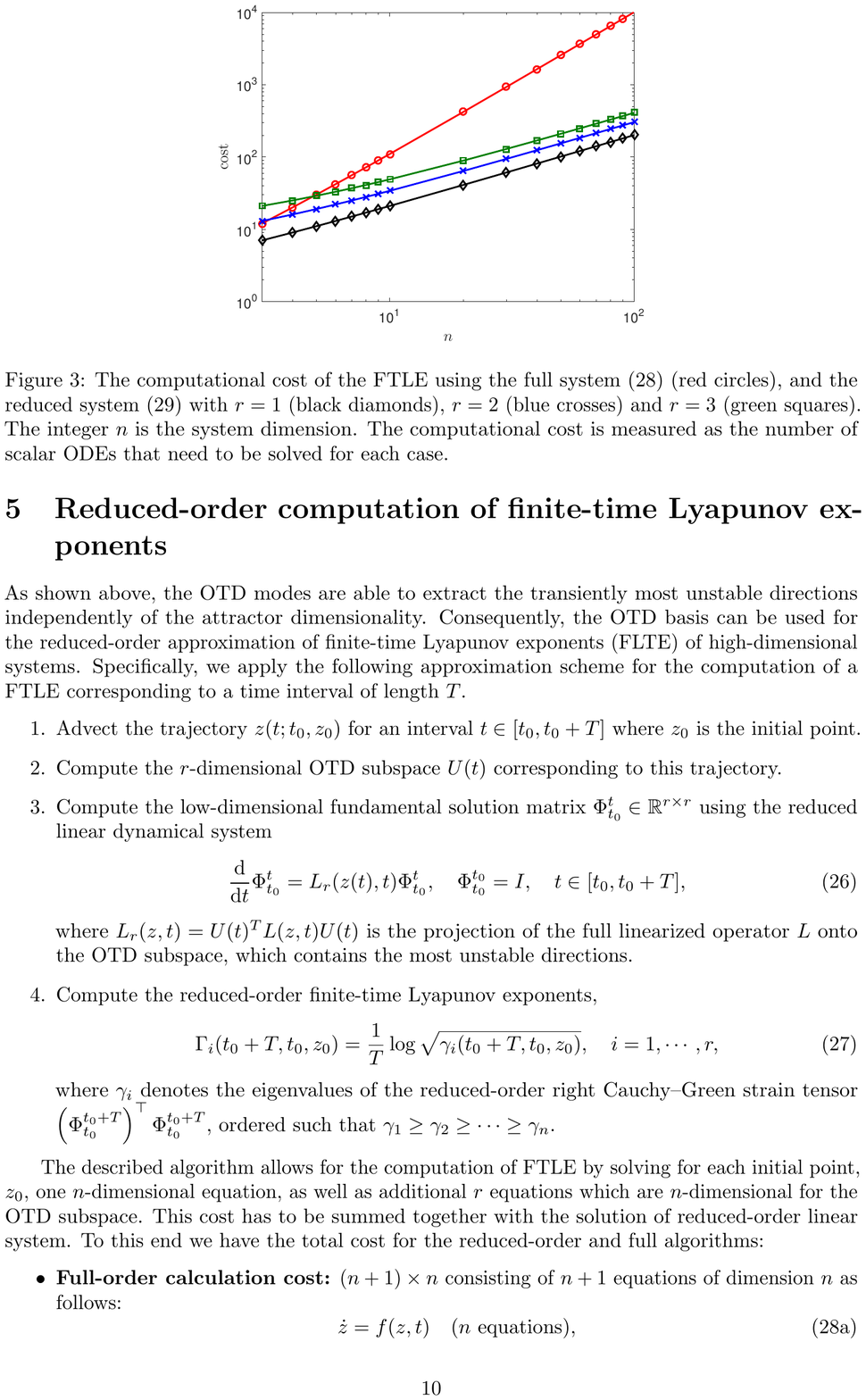}
\caption{The computational cost of the FTLE using the full system~\eqref{eq:full_sys} (red circles),
and the reduced system~\eqref{eq:reduced_sys} with $r=1$ (black diamonds), $r=2$ (blue crosses) and $r=3$ (green squares).
The integer $n$ is the system dimension. The computational cost is measured as the number of scalar ODEs that need to be solved for each case.}
\label{fig:CompCost}
\end{figure}

The described algorithm allows for the computation of FTLE by solving for each initial point, $z_0$, one $n$-dimensional equation, as well as additional $r$ equations which are $n$-dimensional  for the OTD subspace. This cost has to be summed together with the solution of reduced-order linear system. To this end we have the total cost for the reduced-order and full algorithms:
\begin{itemize}
\item
\textbf{Full-order calculation cost:} $(n+1)\times n$ consisting of $n+1$ equations of dimension $n$ as follows:
\begin{subequations}
\begin{equation}
\dot z = f(z,t)\quad \mbox{($n$ equations)},
\end{equation}
\begin{equation}
\frac{\id}{\id t}\nabla F_{t_0}^t(z_0) = L( z(t),t)\nabla F_{t_0}^t(z_0)\quad \mbox{($n\times n$ equations)}.
\end{equation}
\label{eq:full_sys}
\end{subequations}
\item
\textbf{Reduced-order calculation cost:} $n\times (r+1)+r^2$ consisting of $r+1$ equations of dimension $n$ and $r$ equations of dimension $r$
as follows:
\begin{subequations}
\begin{equation}
\dot z = f(z,t)\quad \mbox{($n$ equations)},
\end{equation}
\begin{equation}
\dot U =L(z(t),t)U-U\left(U^{T}L(z(t),t)U\right)\quad \mbox{($n\times r$ equations)},
\label{eq:otd}
\end{equation}
\begin{equation}
\frac{\id}{\id t}\Phi_{t_0}^t = L_{r}( z(t),t)\Phi_{t_0}^t\quad \mbox{($r\times r$ equations)}.
\label{eq:EqVari_reduced}
\end{equation}
\label{eq:reduced_sys}
\end{subequations}
\end{itemize}

We observe that the cost of computing FTLE using the full system is $\mathcal O(n^2)$. The computational cost of the reduced-order FTLE, however, is
$\mathcal O(n)$ for fixed $r$. In principle, the dimension $r$ of the reduced system can be as low as $1$. But, as we will see in what follows, a one-dimensional OTD subspace may lead to \emph{false troughs} in the FTLE field. A higher dimensional OTD subspace returns more accurate estimation of the FTLE field and
often circumvents the false troughs. 

Figure~\ref{fig:CompCost} shows the computational cost of the full FTLE calculation versus the 
reduced-order FTLE calculation. Note that for low-dimensional systems (small $n$) the reduction in the computational
cost is not significant. However, as the system dimension increases ($n\gg r$), the computational cost of the reduced-order
FTLE can be orders of magnitude lower than the full FTLE computations.

\subsection{The ABC Flow}
As the first example we consider the ABC (Arnold–Beltrami–Childress) flow with three dimensional velocity field
\begin{align}
\dot{z}_1 &= A \sin(z_3) + C \cos(z_2),\\ \nonumber
\dot{z}_2 &= B \sin(z_1) + A \cos(z_3),\\ \nonumber
\dot{z}_3 &= C \sin(z_2) + B \cos(z_1),
\end{align}
which is an exact solution of the Euler equation for the ideal incompressible fluids~\cite{topolHydro_arnold}. The ABC flow has 
served as a prototype example for testing numerical methods for computing Lagrangian coherent 
structures~\cite{haller01,stretchlines,pra}. Here, we set $A = \sqrt{3}$, $B = \sqrt{2}$ and $C = 1$ which allows for the existence of 
chaotic Lagrangian trajectories~\cite{haller01}. 

We compute the FTLE on a $251 \times 251$ grid of initial conditions on each face of the cube $[0, 2\pi]^3$ with an integration time 
of length $T=8$. 
We use finite differences to approximate the deformation gradient $\nabla F_{t_0}^{t_0+T}$. To increase the finite difference accuracy, 
we use six auxiliary grids $(z_{0,1}\pm h, z_{0,2}\pm h, z_{0,3}\pm h)$ around each grid point $z_0= (z_{0,1}, z_{0,2}, z_{0,3})$. The 
parameter $h$ controls the accuracy of the finite differences and is set to $h=10^{-8}$ in the following. 
We refer to~\cite{lekien2010,computeVariLCS} for further details on the approximation
of the deformation gradient. Once the deformation gradient is approximated, computing the Cauchy--Green strain tensor and 
consequently the FTLE is straightforward. Note that the deformation gradient can also be obtained by numerically integrating 
the equation of variations~\eqref{eq:lin_dyn_sys}.

To compute the reduced-order FTLE, we numerically integrate system~\eqref{eq:reduced_sys} 
from the initial grid points $z_0$ with an integration time of length $T$. 
The initial condition $U(0)$ for the OTD equation~\eqref{eq:otd} is the $r$ most dominant eigenvectors of the symmetric linear 
operator at $t=0$, i.e. $L^s = (L(z_0,0)+L(z_0,0)^T)/2$. This choice of the OTD initial condition is made since these eigenvectors
are the instantaneously most unstable directions~\cite{Haller2010}.

Figure \ref{Fig:ABC_FTLE} shows the reduced-order FTLE fields using OTD reduction of sizes $r=1$ (left column) and $r=2$ 
(middle column). The true FTLE field (right column) is also shown for comparison. 
Each row of the figure shows a different cross section of the field.
For $r=2$, the reduced-order FTLE is close to the true FTLE field.
This is also true qualitatively for $r=1$. However, for $r=1$, the reduced FTLE exhibits
additional troughs that do not exist in the true FTLE field. 

We demonstrate in figure \ref{Fig:ABC_FTLE_His} that these false troughs occur when there is a 
crossing (or near crossing) between the first and second most dominant eigenvalues of the right Cauchy--Green tensor, i.e.,
$\lambda_1$ and $\lambda_2$. 
Recall that this eigenvalue crossing violates condition (ii) of Theorem~\ref{Thm:Alignment} and hence the discrepancy between the
reduced-order FTLE and the true FTLE is expected.
The false troughs are eliminated as we increase the dimension of the OTD reduction from $r=1$ to $r=2$. 
This is due to the fact that there is not eigenvalue crossing between $\lambda_2$ and $\lambda_3$. 
\begin{figure}
\includegraphics[width=\textwidth]{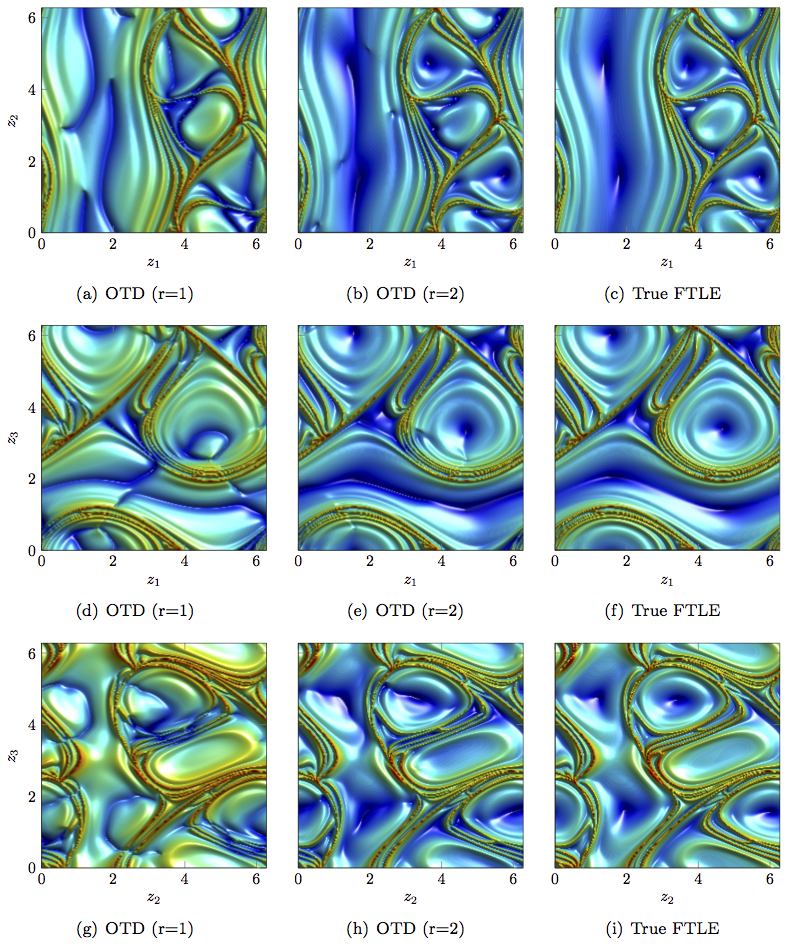}
\caption{ABC flow: comparison of the true FTLE (right column) with OTD reduction of size $r=1$ (left column) and $r=2$ (middle column). }
\label{Fig:ABC_FTLE}
\end{figure}

The above observation is demonstrated in figure~\ref{Fig:ABC_FTLE_His} where the reduced-order FTLEs at two select points 
are compared against the true FTLE as the integration time $T$ varies. In figure \ref{Fig:ABC_his1}, the initial point $z_0=(2.0,0.6,0.0)$ is chosen to be at one of the false troughs. An eigenvalue crossing occurs at approximately $T=0.2$. For advection times before $T=0.2$, both one- and two-dimensional reductions follow the largest FTLE very closely. However, near the eigenvalue crossing the most dominant eigenvector of the Cauchy--Green strain tensor undergoes rotation with an unbounded rate. As demonstrated in inequality \ref{eqn:OTD_bounded}, the one-dimensional OTD reduction cannot follow such rapid rotation.  This is confirmed in figure \ref{Fig:ABC_his1} where it can be seen that the FTLE  obtained from the one-dimensional OTD reduction, after $T=0.2$, follows the second most dominant FTLE instead of undergoing a rapid change that is required to follow the most dominant FTLE. Near $T=1$, the FTLE of the one-dimensional reduction departs from non-dominant FTLEs and starts increasing. This behavior is to be expected since in the remaining advection time no eignevalue crossing occurs and as a result the one-dimensional reduction continues to recover to converge to the most dominant eigenvector -- however slowly.   

 The two-dimensional OTD reduction, however, recovers much faster compared to the one-dimensional OTD reduction after $T=0.2$. We observe that  the two eigenvalues of OTD ($r=2$)  approach  each other nearly at $T=0.5$ causing in a relatively rapid repulsion of these two eigen-directions. This repulsion results in a rapid rotation of  the dominant OTD vector towards the dominant eigen-direction of the Cauchy--Green tensor. This mechanism of rapid correction of the OTD vectors is entirely absent in the one-dimensional OTD reduction.

Figure \ref{Fig:ABC_his2} shows the evolution of the same quantities along a different ABC trajectory.
Here, eigenvalue crossings are absent. As a result both one-dimensional and two-dimensional OTD reductions closely follow the most dominant FTLE and in the case of $r=2$ both most and second most dominant FTLEs. This  demonstrates that, in the absence of eigenvalue crossing, one-dimensional OTD reduction accurately approximates the most dominant  FTLE.
\begin{figure}
\hspace{-1cm}
\subfigure[$z_0=(2.0,0.6,0.0)$]{
\centering
\includegraphics[width=.48\textwidth]{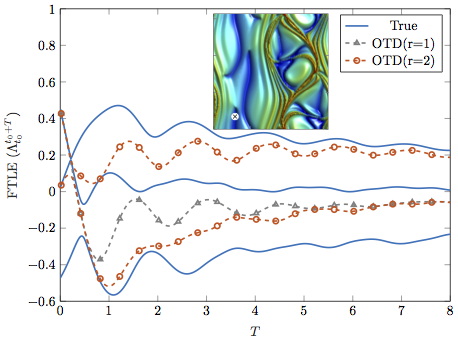}
\label{Fig:ABC_his1}
}
\subfigure[$z_0=(4.0,0.6,0.0)$]{
\centering
\includegraphics[width=.48\textwidth]{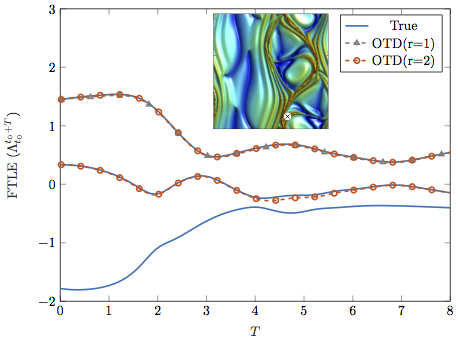}
\label{Fig:ABC_his2}
}
\caption{ABC flow: FTLE versus advection time calculated with full dynamics (solid blue), one-dimensional OTD reduction (dashed gray line with triangle symbols) and two-dimensional OTD reduction (red line with circle symbols) with initial points of: (a)  $z_0=(2.0,0.6,0.0)$, and (b)  $z_0=(4.0,0.6,0.0)$. The initial points are shown with the a cross symbol in the $z_1 - z_2$ plane.}
\label{Fig:ABC_FTLE_His}
\end{figure}
\subsection{Charney–DeVore model with regime transitions}
In this section, we  apply the OTD reduction to the six-dimensional truncation of the equations for barotropic flow in a  plane channel with orography that are known as the Charney–DeVore (CDV) model. The truncated system is given by
\begin{align}
\dot{z}_1 &= \gamma_1^*z_3 - C(z_1 - z_1^*),\\ \nonumber
\dot{z}_2 &= -(\alpha_1 z_1 - \beta_1)z_3 - C z_2 - \delta_1z_4z_6,\\ \nonumber
\dot{z}_3 &=  (\alpha_1 z_1 - \beta_1)z_2 - \gamma_1 z_1 - C z_3 + \delta_1 z_4 z_5,\\ \nonumber
\dot{z}_4 &=  \gamma_2^*z_6 - C(c_4-z_4^*) + \epsilon( z_2 z_6 - z_3z_5),\\ \nonumber
\dot{z}_5 &= -(\alpha_2 z_1 - \beta_2)z_6 - C z_5 - \delta_2 z_4 z_3,\\ \nonumber
\dot{z}_6 &=  (\alpha_2 z_1 - \beta_2)z_2 - \gamma_2 z_4 - C z_6 + \delta_2 z_4 z_2.
\end{align}
The model coefficients are given by
\begin{align}
\alpha_m &= \frac{8\sqrt{2}}{\pi} \frac{m^2}{4m^2-1}\frac{b^2+m^2-1}{b^2+m^2}, \quad \quad \beta_m = \frac{\beta b^2}{b^2+m^2},\\ \nonumber
\delta_m &= \frac{64\sqrt{2}}{15\pi} \frac{b^2-m^2+1}{b^2+m^2}, \quad \quad  \quad\gamma^*_m = \gamma \frac{4m}{4m^2-1}\frac{\sqrt{2}b}{\pi},\\ \nonumber
\epsilon &= \frac{16\sqrt{2}}{5\pi}, \quad \quad \quad \quad \quad \quad \gamma_m = \gamma \frac{4m^3}{4m^2-1}\frac{\sqrt{2}b}{\pi(b^2+m^2)}.
\end{align}
Following~\cite{Crommelin:2004ab}, we set $(z_1^*, z_4^*,C,\beta,\gamma,b) = (0.95, -0.76095, 0.1, 1.25, 0.2, 0.5)$.
For these parameters the CDV model generates regime transitions due  to the interaction of barotropic and topographic instabilities. It was shown in 
\cite{Crommelin:2004aa} that a Proper Orthogonal Decomposition (POD) reduction of this model to three leading POD modes resolves 97\% of cumulative variance but
cannot capture the chaotic regime transitions present in the six-dimensional model. These highly transient instabilities render this model an appropriate test case for 
evaluating the performance of the OTD reduction in computing the FTLE. Moreover, using this model we asses the performance of the OTD reduction for a higher 
dimensional problem. 
\begin{figure}
\includegraphics[width=\textwidth]{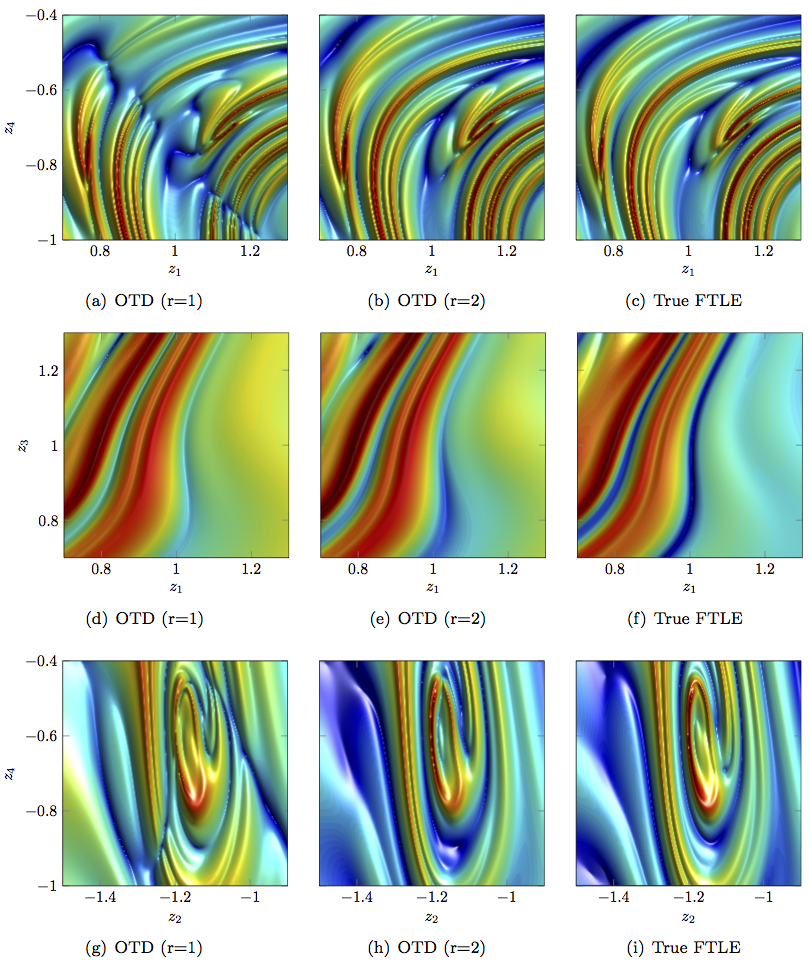}
\label{Fig:CDV_FTLE_Field}
\caption{Six-dimensional Charney–DeVore model: comparison of the true FTLE (rightmost column) with OTD reduction of size $r=1$ (leftmost column) and $r=2$ middle column. Each row shows results of a section of the phase space.}
\end{figure}

We compute the true FTLE using the finite difference method analogous to the ABC flow problem. Similarly, the initial condition $U(0)$ for the
OTD equation~\eqref{eq:otd} are the $r$ most dominant eigenvectors of the symmetric linearized operator. The fourth-order Runge-Kutta scheme with 
time step size $\Delta t=0.4$ (days) is used for the numerical integration of equations~\eqref{eq:full_sys} and~\eqref{eq:reduced_sys}. 
Using the smaller time step $\Delta t=0.2$ (days)  did not change the numerical results.

Figure \ref{Fig:CDV_FTLE_Field} shows the reduced FTLE fields obtained from the one-dimensional reduction (left column) and  two-dimensional reduction (middle column)
as well as the full FTLE (right column). Each row shows a two-dimensional cross section of the phase space.
As in the ABC flow, the reduced FTLE fields agree qualitatively with the fulll FTLE field. 
However, for $r=1$ reduction, some false troughs are observed. 
Away from the false troughs, the one-dimensional reduction estimates the FTLE accurately.
For $r=2$ reduction, there are no false troughs.  

In figure \ref{Fig:CDV_FTLE_His},  the FTLE values 
are plotted as a function of the integration time. These correspond to the point 
$z_0=(1.14,0,0,-0.91,0,0)$, marked by a cross symbol in the inset, which lies on a false trough in the $z_1-z_4$ section. An eigenvalue crossing occurs at approximately $T=9$. Analogues to the ABC flow, we observe that before $T=9$ both one- and two-dimensional OTD reductions capture the most dominant eigenvalues of the Cauchy--Green strain tensor. After $T=9$, the one-dimensional OTD reduction continuously follows the second most dominant eigen-direction of the Cauchy--Green strain tensor. The  inability of a single OTD vector to undergo a dramatic rotation to follow the most dominant eigen-direction of the Cauchy--Green tensor leads to the false trough that is observed in Figure 6(a). The two-dimensional OTD subspace, however, converges to the 
subspace spanned by the two most dominant eigenvectors of the Cauchy--Green strain tensor. 
This convergence is guaranteed by Theorem~\ref{Thm:Alignment} since the second and the third Cauchy--Green eigenvalues $(\lambda_2,\lambda_3)$
do not cross (see figure~\ref{Fig:CDV_FTLE_His}). As a result, the two-dimensional ($r=2$) OTD reduction closely approximates the two larges 
FTLEs for all times. 

\begin{figure}
\centering
\includegraphics[width=.48\textwidth]{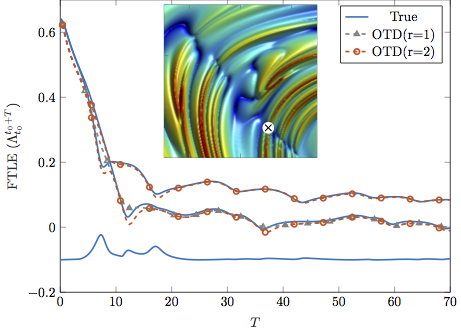}
\caption{Six-dimensional Charney–DeVore model: FTLE versus advection time calculated with full dynamics (solid blue), one-dimensional OTD reduction (dashed gray line with triangle symbols) and two-dimensional OTD reduction (red line with circle symbols) with initial points of   $z_{0,1}=1.14$, $z_{0,4}=-0.91$ and other coordinates being zero. The initial point is shown with the a cross symbol in the $z_1 - z_4$ plane.}
\label{Fig:CDV_FTLE_His}
\end{figure}

The above results demonstrate that, as long as $\lambda_2$ and $\lambda_3$ do not coincide, 
a reduced FTLE with two OTD modes reliably approximates the dominant FTLEs of the full system regardless of the system's dimension $n$.  This amounts to a significant 
reduction in the computational cost of the FTLE evaluations, and the gain in the computational speed is larger as the dimension of the dynamical system increases. 
Investigating the numerical performance of the OTD reduction for infinite-dimensional systems is the subject of future study.

\section{Conclusions}
We have examined the properties of the optimally time-dependent (OTD) modes for general time-dependent dynamical systems. Specifically, we have shown that under mild conditions, related to the spectrum of the Cauchy--Green tensor, the OTD modes converge exponentially fast to the eigendirections of the Cauchy--Green tensor associated with the dominant eigenvalues (largest finite-time Lyapunov exponents). Therefore, the OTD modes can be employed to extract time-dependent subspaces that encode information associated with transient dynamics (finite-time instabilities). 

We have applied the derived result on the formulation of a reduced order algorithm for the computation of the maximum finite-time Lyapunov exponent, a measure that has been used for the quantification of Lagrangian Coherent Structures. We demonstrated the derived results through two specific examples, the three-dimensional ABC flow and the six-dimensional Charney-DeVore model. In both case we thoroughly analyzed the limitations due to dimensionality reduction in combination with eigenvalue crossing. Apart of its value as a computational method for finite-time Lyapunov exponents the presented result paves the way for the development of efficient control and prediction strategies for chaotic dynamical systems exhibiting transient features, such as extreme events and off-equilibrium dynamics.

\subsubsection*{Acknowledgments}
T.P.S. has been supported through the ARO grant 66710-EG-YIP, the AFOSR grant FA9550-16-1-0231, the ONR grant N00014-15-1-2381 and the DARPA grant no. HR0011-14-1-0060. H.B. and M.F. have been supported through the first, second and fourth grants as postdoctoral associates. 

\section*{Appendix A: The OTD modes and the dynamically orthogonal modes}
The set of OTD equations have the same form as the Dynamically Orthogonal
(DO) field equations \cite{sapsis11a, SapsisLermusiaux09}. In fact, because the minimization of the
 function $\mathcal{F}$  is performed only over the rate of change of the basis elements,
$\dot u_i(t)$, (and not the basis elements $u_i(t)$) one can follow exactly the same steps as in Theorem 2.1 in \cite{Babaee} to show that
the evolution equations (\ref{eq:DO_basis}) can be derived for the general
case of a nonlinear operator $\mathcal{N}(U)$. More specifically, by minimization of the
functional:\begin{equation}
\mathcal{F}(\dot u_1,\dot u_2, \dots,\dot u_r) 
 =\sum_{i=1}^{r}\left\Vert \frac{\partial u_{i}\left(t\right)}{\partial
t}-\mathcal{N}({z({t),U(t)},t)}\right\Vert ^{2}
\end{equation}we can obtain the evolution equations.
\begin{equation}
\frac{\partial U}{\partial t}  =\mathcal{N}(U)-UU^{T}\mathcal{N}(U).
\end{equation} For the case of the  DO equations  the
nonlinear operator takes the form\begin{equation}
\mathcal{N}(U)=E^{\omega}[\mathcal{L}(\bar u+\sum^{r} _{i=1}u_iY_i)Y_j]C_{Y_iY_j}^{-1},
\end{equation} 
where we have used the notation in \cite{sapsis11a}, i.e. $\mathcal{L}$ is the
right hand side of the stochastic PDE, $\bar u$ is the trajectory of the mean, $Y_i(t)$ are the stochastic coefficients
and $C_{Y_iY_j}(t)$ is their covariance matrix. The operation $E^\omega$ denotes the average with respect
to an appropriate probability measure. Conversely, one can obtain the OTD mode equations from the DO equations simply by considering their deterministic limit, i.e. by taking the limit $C_{Y_iY_j}(t)\rightarrow0$.

\section*{Appendix B - Proof of Theorem~\ref{thm:dotR}}
We consider the rate of change of the eigenvectors of a general one-parameter family of symmetric matrices,
$G(t) \in \mathbb{R}^{n\times n}$. For such a matrix we have the eigenvalue problem
\begin{equation}
G(t)R(t)=R(t)\Lambda(t)
\end{equation}
where the columns of $R(t)\in \mathbb{R}^{n\times n}$ are the eigenvectors of $G(t)$ and
$\Lambda(t)\in \mathbb{R}^{n\times n}$ is the diagonal matrix of corresponding eigenvalues. 
Note that, since $G$ is symmetric, its eigenvectors are orthogonal and therefore $RR^T=I$.
Differentiating with respect to time, we obtain
\begin{equation*}
\dot{G} R + G \dot{R}= \dot{R}\Lambda + R \dot{\Lambda}. 
\end{equation*}
Multiplying the above equation by $R^T$ from the left yields
\begin{align*}
\dot{\Lambda} &= R^T \dot GR + \Lambda\ R^T\dot{R} - R^T
\dot{R}\Lambda.                    
\end{align*}
Since $R^T R = I$, we have:
\begin{equation*}
\dot{R}^T R + R^T \dot{R} = 0.
\end{equation*}
Let $K = R^T \dot{R}$. From the above relation we observe that $K$ is a skew-symmetric
matrix. Denoting $\widetilde{G} = R^T \dot GR$, we have
\begin{equation*}
  \dot{\Lambda} =  \widetilde{G} + \Lambda K - K \Lambda. 
\end{equation*}
For off-diagonal terms $i \neq j$, we have $\dot{\Lambda}_{ij}=0$. Therefore,
\begin{equation*}
  \widetilde{G}_{ij} = \lambda_j K_{ij} - \lambda_i K_{ij}, \quad i \neq
j.
\end{equation*}
Transposing the above relation
\begin{equation*}
  \widetilde{G}_{ji} = \lambda_i K_{ji} - \lambda_j K_{ji}, \quad i \neq
j.
\end{equation*}
Using the skew-symmetric property of $K$, i.e. $K_{ij}=-K_{ji}$, and summing
up the above two equations yields
\begin{equation}
  K_{ij} = \frac{\widetilde{G}_{ij} + \widetilde{G}_{ji}}{2(\lambda_j - \lambda_i)}
, \quad i \neq j.
\end{equation}
This results in the following closed-form evolution equations for $\Lambda$ and $R$,
\begin{align}
\dot{\Lambda} &=  \mbox{diag}(\widetilde{G}), \\
\dot{R} &= R K.
\end{align}


\bibliographystyle{plain}  
\bibliography{biblio,library}

\end{document}